\documentclass[12pt,twoside]{amsart}
\usepackage{latexsym,amsmath,amsopn,amssymb,amsthm,amsfonts}
\usepackage[T2A]{fontenc}
\usepackage[cp1251]{inputenc}
\usepackage[english]{babel}
\usepackage{graphicx}

\textwidth=15.5cm \oddsidemargin=0.5cm \topmargin=-1cm
\textheight=24cm \evensidemargin=0.5cm

\newtheorem{theorem}{Theorem}
\newtheorem{corollary}{Corollary}
\newtheorem{proposition}{Proposition}

\newtheorem{definition}{Definition}

\newcommand{\ch}{\operatorname{ch}}
\newcommand{\sh}{\operatorname{sh}}
\newcommand{\tth}{\operatorname{th}}

\begin{document}

\title[Event horizon]{Correct observer's event horizon in de Sitter space-time}
\author{{V.N.Berestovskii, I.A.Zubareva}}%
\thanks{The first author is partially supported by Grants of Russian Federation Government for state support of scientific investigations (Agreement no. 14.B25.31.0029) and of RFBR 11-01-00081-a}

\vspace{1cm} \maketitle {MSC 2010: 51P05, 53C50, 53C99, 83C15, 83F05}
\vspace{1cm} \maketitle {\small
\begin{quote}
\noindent{\sc Abstract.}
A correct description of the past and the
future event horizons for timelike geodesics in de Sitter
space-time of the first kind is given.
\end{quote} }

\maketitle {\small
\begin{quote}
\noindent{\textit{Keywords and phrases:}} de Sitter space-time of the first kind, event horizon, globally hyperbolic
space-time, Lorentz manifold, world line.
\end{quote} }

\section{Introduction and main results}
\label{in}

In this paper we continue to study properties of the future and the past event horizons for any time-like geodesic
in de Sitter space-time of the first kind $S(R)$ announced earlier in the paper~\cite{Ber}. Later we shall use shorter
term ''de Sitter space-time''.

In Section~\ref{pr} we give necessary definitions and known results about globally
hyperbolic space-time.

In Section~\ref{osn} are given exact description of the past event horizon for every time-like geodesic of de Sitter
space-time. We prove the theorem announced earlier in the paper ~\cite{Ber} as the theorem~8. This theorem sets
a connection of the future and the past event horizons between themselves and with the so-called Lobachevsky
space of positive curvature $\frac{1}{R^2}$ in the sense of B.A.\,Rosenfeld (see p. 155 in~\cite{Ros}),
which is obtained by the gluing of antipodal events in $S(R)$. In this paper we give correct figures (see Fig.\,2, \,3)
of an observer's event horizon, which differ with the corresponding non-correct figure (see. Fig.\,1) from the
Hawking's book~\cite{Hokrus} on the page~120.

\section{Preliminaries}
\label{pr}

We now remind necessary definitions from \cite{Beem}, \cite{Hok}.

Let $M$ be a $C^{\infty}$--manifold of dimension $n+1\geq 2$. A \textit{Lorentzian metric} $g$ for $M$ is a
smooth symmetric tensor field of the type $(0,2)$ on $M$ which assigns to each point $p\in M$ a nondegenerate inner
product $g\mid_{p}:T_pM\times T_pM\rightarrow\mathbb{R}$ of the signature $(+,+,\dots, +,-).$
Then the pair $(M,g)$ is said to be \textit{Lorentzian manifold}.

A nonzero tangent vector $v$ is said to be respectively \textit{time-like, space-like}, or \textit{isotropic}
if $g(v,v)$ is negative, positive, or zero. A tangent vector $v$ is said to be  \textit{non-space-like} if it is
time-like or isotropic. A continuous vector field $X$ on Lorentzian manifold $M$ is called \textit{time-like} if
$g(X(p),X(p))<0$ for all events $p\in M.$ If Lorentzian manifold  $(M,g)$ admits
a time-like vector field $X$, then we say that $(M,g)$ is \textit{time oriented by the field $X$}. The time-like vector
field $X$ separates all non-space-like vectors into two disjoint classes of \textit{future directed} and
\textit{past directed} vectors. More exactly,  a non-space-like vector $v\in T_pM,\,\,p\in M,$ is said to be
\textit{future directed} (respectively, \textit{past directed}) if $g(X(p),v)<0$ (respectively, $g(X(p),v)>0$).
A Lorentzian manifold is \textit{time orientable} if it admits some time-like vector field $X$.

\begin{definition}
\label{spacetime} A space-time $(M,g)$ is a connected  Hausdorff  manifold of dimension equal or greater than two
with a countable basis supplied with a Lorentzian  $C^{\infty}$-metric $g$ and some time orientation.
\end{definition}

A continuous piecewise smooth curve (path) $c=c(t)$ with $t\in [a,b]$ or $t\in (a,b)$ on Lorentzian manifold
$(M,g)$ is said to be \textit{non-space-like} if $g(c'_{l}(t),c'_{r}(t))\leq 0$ for every inner point $t$ from
the domain of the curve $c$, where $c'_{l}(t)$ (respectively, $c'_{r}(t)$) denotes left (respectively, right) tangent
vector. If $(M,g)$ is a space-time, then the curve $c=c(t)$ with  $t\in [a,b]$ or $t\in (a,b),$ is
\textit{future directed} or \textit{past directed}, i.e. all (one-sided) tangent vectors of the curve $c$
either \textit{are directed to the future} or  \textit{are directed to the past}. The \textit{causal future}  $J^+(L)$
(respectively, the \textit{causal past} $J^-(L)$) of a subset
$L$ of the space-time $(M,g)$ is defined as the  set of all events $q\in M,$ for which  there exists future directed
(respectively past directed) curve $c=c(t), t\in [a,b],$ such that
 $c(a)\in L, c(b)=q.$ If $p\in M$, then we will use reduced notation $J^+(p)$ and $J^-(p)$ instead of
 $J^+(\{p\})$ and $J^-(\{p\})$.

\begin{definition}
\label{stronglycausal}\cite{Beem} An open set $U$ in a space-time
is said to be causally convex if no non-space-like curve intersects
$U$ in a disconnected set. The space-time $(M,g)$ is said to be strongly causal
if each event in $M$ has arbitrarily small causally convex neighborhoods.
\end{definition}

In \cite{Beem} it has been proved the following important proposition 2.7.

\begin{proposition}
\label{alsc}
A space-time $(M,g)$ is strongly causal if and only if the sets of the form
$I^+(p)\cap I^-(q)$ with arbitrary $p,q\in M$ form a basis of original topology
(i.e. the Alexandrov topology induced on $(M,g)$ agrees with given manifold topology).
\end{proposition}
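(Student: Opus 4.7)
The Alexandrov topology is always coarser than or equal to the manifold topology, because each set $I^+(p)$ and $I^-(q)$ is open in the manifold topology (a standard consequence of the fact that one can perturb a timelike curve inside a convex normal chart). Consequently both implications of the proposition reduce to comparing neighborhood bases at an arbitrary event $p\in M$, and the argument pivots on the so-called \emph{push-up lemma}: if a non-spacelike curve from $x$ to $y$ contains a timelike sub-segment, then $y\in I^+(x)$.

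For the implication strongly causal $\Longrightarrow$ the Alexandrov sets form a basis, fix $p\in M$ and a manifold-open neighborhood $U$ of $p$. By strong causality I pick a causally convex neighborhood $V\subseteq U$ of $p$, and inside $V$ a convex normal neighborhood $W$ of $p$. In $W$ the exponential-chart picture lets me select explicit events $r$ in the chronological past and $s$ in the chronological future of $p$, so that automatically $p\in I^+(r)\cap I^-(s)$. Any $q\in I^+(r)\cap I^-(s)$ lies on a concatenated non-spacelike curve from $r$ through $q$ to $s$; since $r,s\in V$ and $V$ is causally convex, the entire curve (hence $q$) stays in $V\subseteq U$. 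This gives the required basis inclusion.

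For the converse, assume $\{I^+(r)\cap I^-(s)\}$ is a basis for the manifold topology. Given $p$ and an open neighborhood $U\ni p$, choose $r,s$ with $p\in I^+(r)\cap I^-(s)\subseteq U$. It suffices to show that the set $I^+(r)\cap I^-(s)$ is itself causally convex. Let $c\colon[a,b]\to M$ be a future-directed non-spacelike curve with $c(a),c(b)\in I^+(r)\cap I^-(s)$, and fix $t\in(a,b)$. Concatenating a timelike curve from $r$ to $c(a)$ with $c|_{[a,t]}$ produces a non-spacelike curve from $r$ to $c(t)$ whose initial portion is timelike, so the push-up lemma furnishes a timelike curve from $r$ to $c(t)$; thus $c(t)\in I^+(r)$. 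A symmetric argument applied to $c|_{[t,b]}$ and a timelike curve from $c(b)$ to $s$ gives $c(t)\in I^-(s)$, proving causal convexity.

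The only nontrivial geometric inputs are the openness of $I^{\pm}$ and the push-up lemma, and the latter is where I expect the main obstacle to sit: its proof is purely local, smoothing a timelike corner by perturbing the curve through a convex normal chart so that the non-timelike portion is lifted into the chronological future. With these two tools in hand, everything else is a direct manipulation of the definitions of causal convexity and of the Alexandrov subbasis, and the equivalence follows.
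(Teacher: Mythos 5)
Your argument is correct, but note that the paper does not prove this statement at all: it is quoted verbatim as Proposition 2.7 of Beem--Ehrlich \cite{Beem}, so there is no in-paper proof to compare against. What you give is the standard textbook argument from that source (openness of $I^{\pm}$, causally convex plus convex normal neighborhoods for one direction, and the push-up property $x\le y\ll z\Rightarrow x\ll z$ to show that every Alexandrov set $I^+(r)\cap I^-(s)$ is automatically causally convex for the converse), and both directions go through as you describe. The only point worth tightening is in the converse: causal convexity as defined here requires $c^{-1}\bigl(I^+(r)\cap I^-(s)\bigr)$ to be connected for an arbitrary non-spacelike curve $c$, not just for curves whose two endpoints already lie in the set; this reduces immediately to the case you treat by restricting $c$ to a parameter subinterval whose endpoints map into the set, but you should say so explicitly.
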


\begin{definition}
\label{globgiperbolic} A space-time $(M,g)$ is called globally
hyperbolic if it is strongly causal and satisfies the condition that $J^+(p)\cap J^-(q)$ is compact for all $p,q\in M$.
\end{definition}

\begin{definition}
\label{horizons} Let $S$ be a subset in a globally hyperbolic space-time $(M,g).$ Then $\Gamma^{-}(S)$
(respectively, $\Gamma^{+}(S)$) denotes the boundary of the set $J^{-}(S)$ (respectively, $J^{+}(S)$) and is called
the past event horizon (respectively, the future event horizon) of the set $S$.
\end{definition}

A simplest example of a globally hyperbolic space-time is \textit{the Minkowski space-time}
\textit{Mink}$^{\,n+1}$, $n+1\geq 2,$ i.e. a
manifold $\mathbb{R}^{n+1}$, $n+1\geq 2,$ with the Lorentz metric  $g$ which has the constant components $g_{ij}$:
$$g_{ij}=0,\,\,\mbox{if}\,\,i\neq j;\quad g_{11}=\ldots
=g_{nn}=1,\,\,g_{(n+1)(n+1)}=-1.$$ in natural coordinates $(x_1,\dots,$ $x_n,$$t)$
on $\mathbb{R}^{n+1}.$ The time orientation is defined by the vector field
$X$ with components $(0,\dots,0,1)$ relative to canonical coordinates in $\mathbb{R}^{n+1}.$

A more interesting example is \textit{de Sitter space-time}. It is easily visualized as the hyperboloid
$S(R)$ with one sheet
\begin{eqnarray}\label{m1}
\sum_{k=1}^{n}x_k^2-t^2=R^2,\,\,R>0,
\end{eqnarray}
 in Minkowski space-time
\textit{Mink}$^{\,n+1}$, $n+1\geq 3,$ with Lorentzian metric induced from
\textit{Mink}$^{\,n+1}$.

\textit{The Lorentz group} is the group of all linear isometries of the space \textit{Mink}$^{\,n+1},$ transforming
to itself the <<upper>> sheet of the hyperboloid
$\sum_{k=1}^{n}x_k^2-t^2=-1$ (which is isometric to the Lobachevsky space of the constant sectional curvature -1).
The Lorentz group acts transitively by isometries on $S(R).$

The time orientation on $S(R)$ is defined by unit tangent vector field
$Y$ such that $Y$ is orthogonal to all time-like sections
$$
S(R,c)=S(R)\cap \{(x_1,\dots,x_{n}, t)\in \mathbb{R}^{n+1}:
t=c\},\,\,c\in\mathbb{R}.
$$

Notice that every integral curve of the vector field $Y$ is a future directed time-like geodesic in
$S(R)$. Therefore we can consider it as a world line of some observer.

\section{Main result}
\label{osn}

The main result of this paper is the following

\begin{theorem}
\label{gorizont}
 Let $L$ be a time-like geodesic in de Sitter space-time, $\Gamma^{-}(L)$ is the past event horizon
 for $L$ (observer's event horizon). Then

1. $\Gamma^{-}(L)=S(R)\cap\alpha$, where $\alpha$ is some hyperspace in $\mathbb{R}^{n+1}$ which goes through
the origin of coordinate system, consists of isotropic geodesics.

2. $J^+(L)=-J^-(-L),$ $J^-(L)=-J^+(-L).$

3. The sets $J^-(L)$ and $J^+(-L)$ (respectively $J^+(L)$ and
$J^-(-L)$) don't intersect and have joint boundary $\Gamma^{-}(L)$.
In particular, the past event horizon for $L$ coincides with the future event horizon for $-L$
(respectively, the future event horizon for $L$ coincides with the past event horizon for $-L$).

4. The quotient map $pr: S(R)\rightarrow S^1_n(R),$ gluing antipodal events in $S(R),$
is diffeomorphism on all open submanifolds $J^+(L),$ $J^-(-L),$ $J^-(L),$ $J^+(-L)$ which identifies antipodal
events of boundaries for these submanifolds (i.e. the corresponding event horizons).

5.  The quotient manifold $(S^1_n(R),G),$ where $g=pr^{\ast}G,$ is the Lobachevsky space of positive curvature
$\frac{1}{R^2}$ in the sense of B.A.Rosenfeld (see p. 155 in \cite{Ros}).
\end{theorem}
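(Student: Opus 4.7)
\medskip

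\noindent \textbf{Proof plan.} Using the transitive action of the Lorentz group on timelike geodesics of $S(R)$ we may assume
\[
L = L_0 := \{p(\tau)=(0,\ldots,0,R\cosh\tau,R\sinh\tau):\tau\in\mathbb{R}\},
\]
whose future null endpoint at infinity is $\ell_+=(0,\ldots,0,1,1)$; the candidate horizon is then $S(R)\cap\alpha$ with $\alpha=\ell_+^{\perp}=\{v:v_n=v_{n+1}\}$, a null hyperplane through the origin foliated by the isotropic straight lines $\{(x_1,\ldots,x_{n-1},s,s):s\in\mathbb{R}\}$ with $\sum_{k<n}x_k^2=R^2$. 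I will invoke the standard characterization of causality on $S(R)$: for $p,q\in S(R)$ one has $q\in J^{-}(p)$ if and only if $\langle p,q\rangle\geq R^2$ and $t_q\leq t_p$. This follows from two observations: (a) a causal curve in $S(R)$ is automatically causal in the ambient Minkowski space (the metric is induced, and the two time orientations agree because the future unit normal $Y$ to the spacelike sections $\{t=c\}\cap S(R)$ satisfies $Y_{n+1}>0$), so $p-q$ lies in the closed future causal cone, yielding $2R^2-2\langle p,q\rangle=\langle p-q,p-q\rangle\leq 0$ and $t_p\geq t_q$; conversely (b), $\langle p,q\rangle>R^2$ (resp.\ $=R^2$) puts $p,q$ on the same branch of a timelike hyperbola (resp.\ on a common null generator) inside the $2$-plane through $0,p,q$, and the time-ordering picks out the past.

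Combining this with the identity
\[
\langle p(\tau),q\rangle = \tfrac{R}{2}\bigl[(x_n-t)e^{\tau}+(x_n+t)e^{-\tau}\bigr]
\]
and a case analysis on the sign of $x_n-t$ yields $J^{-}(L_0)=\{q\in S(R):x_n>t\}$. If $x_n>t$, the first exponential dominates as $\tau\to+\infty$, so both $\langle p(\tau),q\rangle\geq R^2$ and $R\sinh\tau\geq t$ hold for large $\tau$; if $x_n<t$ with $x_n+t\leq 0$, then $\langle p(\tau),q\rangle<R^2$ for every $\tau$; and in the delicate subcase $x_n<t$, $x_n+t>0$, where $\langle p(\tau),q\rangle\geq R^2$ holds on the half-line $\tau\leq\tau_{\mathrm{sol}}$, one checks $R\sinh\tau_{\mathrm{sol}}<t$ by contradiction: assuming $u:=\sinh\tau_{\mathrm{sol}}\geq t/R$, the identity $x_n\cosh\tau_{\mathrm{sol}}=R+t\sinh\tau_{\mathrm{sol}}$ rewrites as $x_n=(R+tu)/\sqrt{1+u^2}$, and this expression exceeds $t$ whenever $u\geq t/R$, contradicting $x_n<t$. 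Taking boundary in $S(R)$ gives $\Gamma^{-}(L_0)=\{x_n=t\}=S(R)\cap\alpha$, which proves Part~1 after undoing the Lorentz isometry.

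Parts~2--5 now follow quickly. For Part~2, the antipodal map $\sigma:v\mapsto -v$ is a linear isometry of Minkowski preserving $S(R)$ and reversing time orientation, so $\sigma(J^{\pm}(A))=J^{\mp}(-A)$ for every $A\subseteq S(R)$. For Part~3, in the normal form $J^{-}(L_0)=\{x_n>t\}$ and, by Part~2, $J^{+}(-L_0)=-J^{-}(L_0)=\{x_n<t\}$, so these are disjoint open sets with common boundary $\{x_n=t\}=\Gamma^{-}(L_0)$; the complementary pair $J^{+}(L_0),J^{-}(-L_0)$ is handled by replacing $\ell_+$ with the past null endpoint $\ell_-=(0,\ldots,0,1,-1)$. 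For Part~4, each of the four open sets has the form $\{q:\langle q,\ell\rangle\gtrless 0\}$ for some null $\ell$, and this sign reverses under $\sigma$; hence no such set contains an antipodal pair, so the smooth double cover $pr$ restricts on each to an injective local diffeomorphism---i.e., an open embedding---while on the $\sigma$-invariant boundary $\Gamma^{-}(L_0)=\{\langle q,\ell_+\rangle=0\}$ it collapses antipodal pairs as asserted. Part~5 is the recognition that $(S^1_n(R),G)$ with the induced metric coincides by definition with Rosenfeld's Lobachevsky space of positive curvature $1/R^2$ (cf.\ p.\,155 of \cite{Ros}).

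The principal obstacle is the subcase $x_n<t$, $x_n+t>0$ of Part~1: there $\langle p(\tau),q\rangle\geq R^2$ holds on an entire half-line of $\tau$, so the exclusion of $q$ from $J^{-}(L_0)$ must be extracted from the time-ordering condition via the contradiction argument above. All other steps are short consequences of Part~1 combined with the symmetries of $S(R)$.
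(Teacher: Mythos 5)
Your proposal is correct, and Parts 2--5 coincide with the paper's argument almost verbatim (antipodal map reversing time orientation for Part 2, the explicit half-space descriptions for Part 3, the no-antipodal-pair observation plus the double-cover property of $pr$ for Part 4, and Part 5 by inspection). The genuine difference is in Part 1. The paper works dynamically: it writes $L$ as the orbit of $p=(R,0,\dots,0)$ under the boost group $\Phi_\psi$, records the isotropic cone $C_{L(\psi)}=\{x_1-t\operatorname{th}\psi=R/\operatorname{ch}\psi\}$, uses the nesting $J^{-}(L(\psi_1))\subset J^{-}(L(\psi_2))$ for $\psi_1<\psi_2$ to present $J^{-}(L)$ as an increasing union, and reads off $\Gamma^{-}(L)=\{x_1=t\}$ as the limiting position of the lower half-cones as $\psi\to+\infty$. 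You instead reduce everything to the algebraic causality criterion $q\in J^{-}(p)\Leftrightarrow\langle p,q\rangle\ge R^2,\ t_q\le t_p$ inherited from the ambient Minkowski space, and then run a pointwise case analysis on the sign of $x_n-t$. Your route has the advantage of making fully explicit the one delicate case ($x_n<t$, $x_n+t>0$, where the inner-product condition holds on a whole half-line of parameters and only the time-ordering excludes $q$) --- a point the paper's ``limiting position of the cone'' argument glosses over --- and your contradiction via $x_n=(R+tu)/\sqrt{1+u^2}>t$ for $u\ge t/R$ checks out. The price is that the burden shifts onto the causality criterion itself: in direction (b) you assert that ``the time-ordering picks out the past,'' which tacitly uses that $t$ is strictly monotone along the future-directed branch of the timelike hyperbola through $p$ and $q$; this is true (it follows from $w_t^2\ge v_t^2+R^2$ for an orthogonal spacelike/timelike pair $v,w$ spanning a timelike $2$-plane) but deserves a line. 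The paper, by contrast, takes the shape of $J^{-}(p)$ as the region cut out by $C_p$ for granted, so neither exposition is strictly more complete; they are complementary ways of establishing the same formulas \eqref{m2} and \eqref{m6}--\eqref{m10}.
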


\begin{proof}
Notice that if $L$ and $L^{\prime}$ are time-like geodesics in $S(R)$ and $p\in L$,
$p^{\prime}\in L^{\prime},$ then there exists preserving the time direction isometry $i$ of de Sitter space-time
such that $i(p)=p^{\prime}$ and $i(L)=L^{\prime}$. Therefore $i$ translates the past event horizon of $L$ to
the past event horizon of $L^{\prime}$. Therefore it is enough to prove points~1--4 of
theorem\,\ref{gorizont} only for one time-like geodesic.

We will suppose that $L$ is integral curve of the vector field $Y$, which intersects $S(R,0)$ at the event $p$
with Descartes coordinates $(R,0,\dots,0)$. Let us show that
\begin{eqnarray}\label{m2}
\Gamma^{-}(L)=S(R)\cap\{(x_1,\dots,x_{n},t)\in \mathbb{R}^{n+1}:
x_1=t\}.
\end{eqnarray}

Denote by $C_p$ isotropic cone at the point $p$\,\,with Descartes coordinates
$(R,0,\dots,0)$. It follows from (\ref{m1}) that
\begin{eqnarray}\label{m3}
C_p=\{(x_1,\dots,x_n,t)\in S(R)\,\mid\,\,x_1=R\}.
\end{eqnarray}

Note that the causal past $J^{-}(p)$ of the event $p$ is the region in $S(R)$, lying in the hyperspace $t<0$ and bounding
by isotropic cone $C_p$.

For any $\psi\in\mathbb{R},$ the restriction of the Lorentz transformation $\Phi_\psi:S(R)\rightarrow S(R)$, realizing
hyperbolic rotation in two-dimensional plane $Ox_1t$, prescribed by formulae
\begin{eqnarray}\label{m4}
x^{\prime}_1=x_1\ch\psi+t\sh\psi;\quad
x^{\prime}_i=x_i,\,\,i=2,\dots,n; \quad
t^{\prime}=x_1\sh\psi+t\ch\psi,
\end{eqnarray}
is an isometry of the space-time $S(R)$, preserving the time direction. The set $\Phi$ of all such transformations
$\Phi_\psi$, $\psi\in\mathbb{R}$, forms a one-parameter subgroup of the Lorentz group. Here the orbit of the event
$p$ with respect to $\Phi$ coincides (up to parametrization) with the curve $L$, and the event
$L(\psi):=\Phi_{\psi}(p)$ has Descartes coordinates $(R\ch\psi,0,\ldots,0,R\sh\psi)$.

Under the action of $\Phi_\psi,$ isotropic cone $C_p$ at the point $p$ passes to isotropic cone $C_{L(\psi)}$ at the
point $L(\psi)$ and the causal past $J^{-}(p)$ of the event $p$ passes to the causal past $J^{-}(L(\psi))$
of the event $L(\psi)$. It follows from (\ref{m3}), (\ref{m4}) that
\begin{eqnarray}\label{m5}
C_{L(\psi)}=\left\{(x_1,\dots,x_n,t)\in
S(R)\,\mid\,\,x_1-t\tth\psi=\frac{R}{\ch\psi}\right\}.
\end{eqnarray}
Note that the set $J^{-}(L)$ of observed events for $L$ is the union of all sets $J^{-}(L(\psi))$, where
$\psi\in\mathbb{R}$. If $\psi_1<\psi_2$ then $L(\psi_1)\in J^{-}(L(\psi_2))$, therefore
$J^{-}(L(\psi_1))\subset J^{-}(L(\psi_2))$. Hence the past event horizon of $L$ is limiting position of the <<lower>>
half (lying in the hyperspace $t<\frac{x_1\sh{\psi}}{\ch{\psi}}$) of isotropic cone
$C_{L(\psi)}$ when $\psi\rightarrow +\infty$. Now (\ref{m2}) follows from (\ref{m5}) and the fact that
$\ch\psi\rightarrow +\infty$, $\tth\psi\rightarrow 1$ when
$\psi\rightarrow +\infty$.

Note also that the future event horizon $\Gamma^{+}(L)$ for $L$ is limiting position of  <<upper>> half
(lying in the hyperspace $t>\frac{x_1\sh{\psi}}{\ch{\psi}}$) of isotropic cone
$C_{L(\psi)}$ when $\psi\rightarrow -\infty$. Then by (\ref{m5}),
\begin{eqnarray}\label{m6}
\Gamma^{+}(L)=S(R)\cap\{(x_1,\dots,x_{n},t)\in \mathbb{R}^{n+1}:
x_1+t=0\}.
\end{eqnarray}

It follows from (\ref{m2}), (\ref{m6}) that
\begin{eqnarray}\label{m7}
J^{-}(L)=S(R)\cap\{(x_1,\dots,x_{n},t)\in \mathbb{R}^{n+1}:
x_1-t>0\},
\end{eqnarray}
\begin{eqnarray}\label{m8}
J^{+}(L)=S(R)\cap\{(x_1,\dots,x_{n},t)\in \mathbb{R}^{n+1}:
x_1+t>0\}.
\end{eqnarray}

To prove the point~2 of theorem~\ref{gorizont}, it is enough to note that central symmetry $i_0$ of the space-time
$\mathbb{R}^{n+1}$ relative to the origin of coordinate system is an isometry of de Sitter space-time,
reversing the time direction. Therefore $i_0(J^-(p))=J^+(-p)$, $i_0(J^+(p))=J^-(-p)$ for any event $p\in L$.
Corresponding equalities in the point~2 of theorem\,\ref{gorizont} follow from here.

On the ground of p.~2 in theorem\,\ref{gorizont} and (\ref{m7}), (\ref{m8}),
\begin{eqnarray}\label{m9}
J^{+}(-L)=S(R)\cap\{(x_1,\dots,x_{n},t)\in \mathbb{R}^{n+1}:
x_1-t<0\},
\end{eqnarray}
\begin{eqnarray}\label{m10}
J^{-}(-L)=S(R)\cap\{(x_1,\dots,x_{n},t)\in \mathbb{R}^{n+1}:
x_1+t<0\}.
\end{eqnarray}

The statements of p.~3 in theorem\,\ref{gorizont} are valid in view of (\ref{m2}), (\ref{m6}) -- (\ref{m10}).

Let us prove p.~4 of theorem~\ref{gorizont}. It follows from (\ref{m7}), (\ref{m8}) that every set $J^-(L)$, $J^+(L)$
is open and contains no pair of antipodal events. Conversely, in consequence of
(\ref{m2}), (\ref{m6}), the past event horizon $\Gamma^{-}(L)$ and the future event horizon $\Gamma^{+}(L)$ for $L$
are centrally symmetric sets. Therefore the quotient map $pr: S(R)\rightarrow S^1_n(R)$, identifying antipodal events
from $S(R),$  is a diffeomorphism on the open submanifold $J^-(L)$ ($J^+(L)$) and glues antipodal events of the set
$\Gamma^{-}(L)$ ($\Gamma^{+}(L)$). Now the rest statements of p.\,4 in theorem~\ref{gorizont} follow from equalities
in p.~2 of this theorem.

P.~5 of theorem\,\ref{gorizont} is an immediate corollary of statements in p.~4.
\end{proof}

\begin{corollary}\label{sl}
Let $L$ be a time-like geodesic in $S(R)$, $p$ is the joint event of $L$ and $S(R,0)$. Then the past event horizon
$\Gamma^{-}(L)$ for $L$ intersects $S(R,0)$ by the sphere $S_{S(R,0)}(p,\pi R/2)$ of the radius
$\pi R/2$ with the center $p$.
\end{corollary}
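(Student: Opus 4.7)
The plan is to invoke formula~(\ref{m2}) from Theorem~\ref{gorizont} directly, after a small symmetry reduction.

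First I would exploit the subgroup $O(n)$ of the Lorentz group that acts linearly on the spatial coordinates $(x_1,\dots,x_n)$ and trivially on $t$. This subgroup consists of isometries of $S(R)$, preserves the time orientation and the field $Y$, fixes every section $S(R,c)$ setwise, and acts transitively on each such section. Hence I may assume without loss of generality that $L$ is precisely the integral curve of $Y$ used in the proof of Theorem~\ref{gorizont}, with $p$ of Descartes coordinates $(R,0,\dots,0)$; since $O(n)$ preserves $S(R,0)$ together with its induced Riemannian metric, the statement of the corollary is invariant under this reduction.

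Once in this setting, formula~(\ref{m2}) reads $\Gamma^{-}(L)=S(R)\cap\{x_1=t\}$. Intersecting with $S(R,0)=S(R)\cap\{t=0\}$ yields
\[
\Gamma^{-}(L)\cap S(R,0)=\bigl\{(0,x_2,\dots,x_n,0)\in\mathbb{R}^{n+1}:\;x_2^2+\dots+x_n^2=R^2\bigr\},
\]
which is the ``equator'' of the round $(n-1)$-sphere $S(R,0)$ opposite the pole $p$.

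It remains to identify this equator with the intrinsic metric sphere $S_{S(R,0)}(p,\pi R/2)$. The Lorentzian metric of \textit{Mink}$^{\,n+1}$ restricts on the hyperspace $\{t=0\}$ to the ordinary Euclidean metric on $\mathbb{R}^n$, so the Riemannian metric induced on $S(R,0)$ is exactly that of a Euclidean sphere of radius $R$. On such a sphere the geodesic distance from the pole $p$ to every point of the opposite equator equals $\pi R/2$, and no other points of $S(R,0)$ attain this distance; hence the equator above coincides with $S_{S(R,0)}(p,\pi R/2)$, as required. The only mildly delicate point is the first one: a general Lorentz isometry used in the proof of Theorem~\ref{gorizont} need not preserve the specific slice $S(R,0)$, so one must be careful to reduce via the smaller subgroup $O(n)$, which does; the remaining computation is then immediate.
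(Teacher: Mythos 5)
Your overall route is the same as the paper's: normalize $L$, apply formula~(\ref{m2}), intersect with $\{t=0\}$, and identify the resulting equator with the intrinsic sphere $S_{S(R,0)}(p,\pi R/2)$ of the round sphere of radius $R$. The second and third steps are correct and are exactly what the paper does (the paper simply records the set equality $S_{S(R,0)}(p,\pi R/2)=\{x_1=0,\ t=0\}\cap S(R)$ without your justification, so your version is if anything more complete there).

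The problem is the normalization step, precisely the point you flag as delicate. The subgroup $O(n)$ acts transitively on $S(R,0)$, so it lets you move $p$ to $(R,0,\dots,0)$; but the stabilizer of $p$ in $O(n)$ is an $O(n-1)$ acting on $(x_2,\dots,x_n)$, which cannot rotate the time-like direction of $L$ at $p$. Hence ``I may assume $L$ is the integral curve of $Y$'' does not follow: a time-like geodesic through $p$ need not be orthogonal to $S(R,0)$. Moreover the statement genuinely fails for such tilted geodesics. For $n=2$, applying the boost $x_2'=x_2\ch\phi+t\sh\phi$, $t'=x_2\sh\phi+t\ch\phi$ to the standard $L$ gives a geodesic $L'$ still passing through $p=(R,0,0)$, with $\Gamma^{-}(L')=S(R)\cap\{x_1+x_2\sh\phi-t\ch\phi=0\}$; its intersection with $S(R,0)$ is the two points $(\mp R\tth\phi,\pm R/\ch\phi,0)$, which lie at arc distances $R\arccos(\mp\tth\phi)\neq \pi R/2$ from $p$, so the intersection is not a metric sphere centered at $p$ at all. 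The corollary is only true for geodesics orthogonal to $S(R,0)$ at $p$, i.e.\ integral curves of $Y$, and your $O(n)$ argument correctly covers exactly that case. To be fair, the paper's own reduction (``using the argument in the proof of Theorem~\ref{gorizont}, we can assume without loss of generality \dots'') has the same defect, since the isometry produced there need not preserve the slice $S(R,0)$; you diagnosed the danger correctly but the proposed fix does not close it for an arbitrary time-like geodesic, and no fix can, because the general statement is false as written.
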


\begin{proof}
Using the argument in the proof of theorem\,\ref{gorizont}, we can assume without loss of generality that $L$ is
integral curve of vector field $Y$, intersecting $S(R,0)$ at the point $p$ with Descartes coordinates
$(R,0,\ldots,0)$. Now the corollary~\ref{sl} follows from (\ref{m2}) and the fact that
$$S_{S(R,0)}(p,\pi R/2)=\{(x_1,\ldots,x_n,t)\in S(R)\mid\,x_1=0,\,t=0\}.$$
\end{proof}

One can consider the time-like geodesic $L$ above as the history (or the world line) of an \textit{eternal} observer.

The Fig.~4.18, p. 120 in the Hawking's book~\cite{Hokrus} (Fig.\,1 in our paper) admits two interpretations, namely as
a picture of the history and corresponding (past) event horizon for \textit{a real} or \textit{an eternal} observer
in de Sitter space-time.

The first interpretation corresponds to the inscription "Surface of constant time" but then contradicts to the smoothness
of bright region at its top point since the top point must be the cone point for this region. To avoid the last mistake on
Fig.~4.18, it would better to take the second interpretation and change the inscription "Surface of constant time" by
the inscription ''$t=\infty$'', assuming that the scale on the picture goes to zero when $t\rightarrow \infty.$
But for the second interpretation, the observer's event horizon is depicted incorrectly since on the ground of the
corollary~\ref{sl}, it must intersect the <<throat>>\ of the hyperboloid of one sheet (the sphere $S(R,0)$) by the sphere
$S_{S(R,0)}(p,\pi R/2)$, where $p$ is the intersection event of the world line $L$ with $S(R,0)$; for the first
interpretation, the above intersection must be $S_{S(R,0)}(p,r)$, where $r < \pi R/2$. On the other hand, this
intersection on Fig.~4.18 is empty.

The correct picture for the second interpretation for bounded (respectively, infinite) time is given on our Fig.~2
(respectively, Fig.~3). Note also that the observer's event horizon (see Fig.\,2) consists of all isotropic
geodesics, lying in corresponding hyperspace in \textit{Mink}$^{\,n+1}$ going through zero event.

Earlier V.N.Berestovskii formulated without proof the above statements about past event horizon
in his plenary talk, but in the text \cite{Ber1} of this talk they are absent.

\newpage

\begin{figure}[h]
\includegraphics[width=13.5cm]{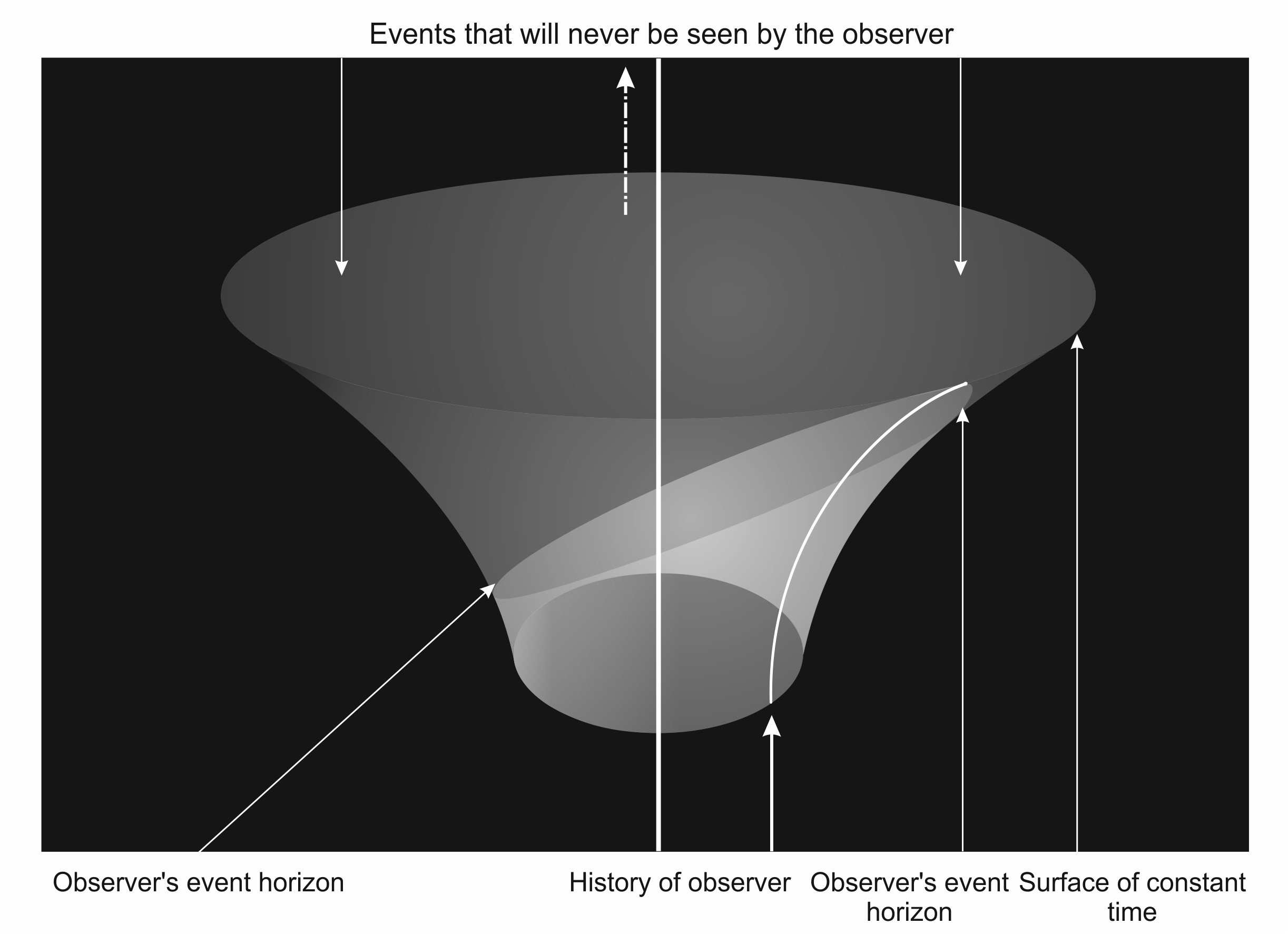}
\caption{Fig.~4.18 of (real) observer's event horizon from Hawking's book~\cite{Hokrus}}
\label{fig:fig1}
\end{figure}

\begin{figure}[h]
\includegraphics[width=13.5cm]{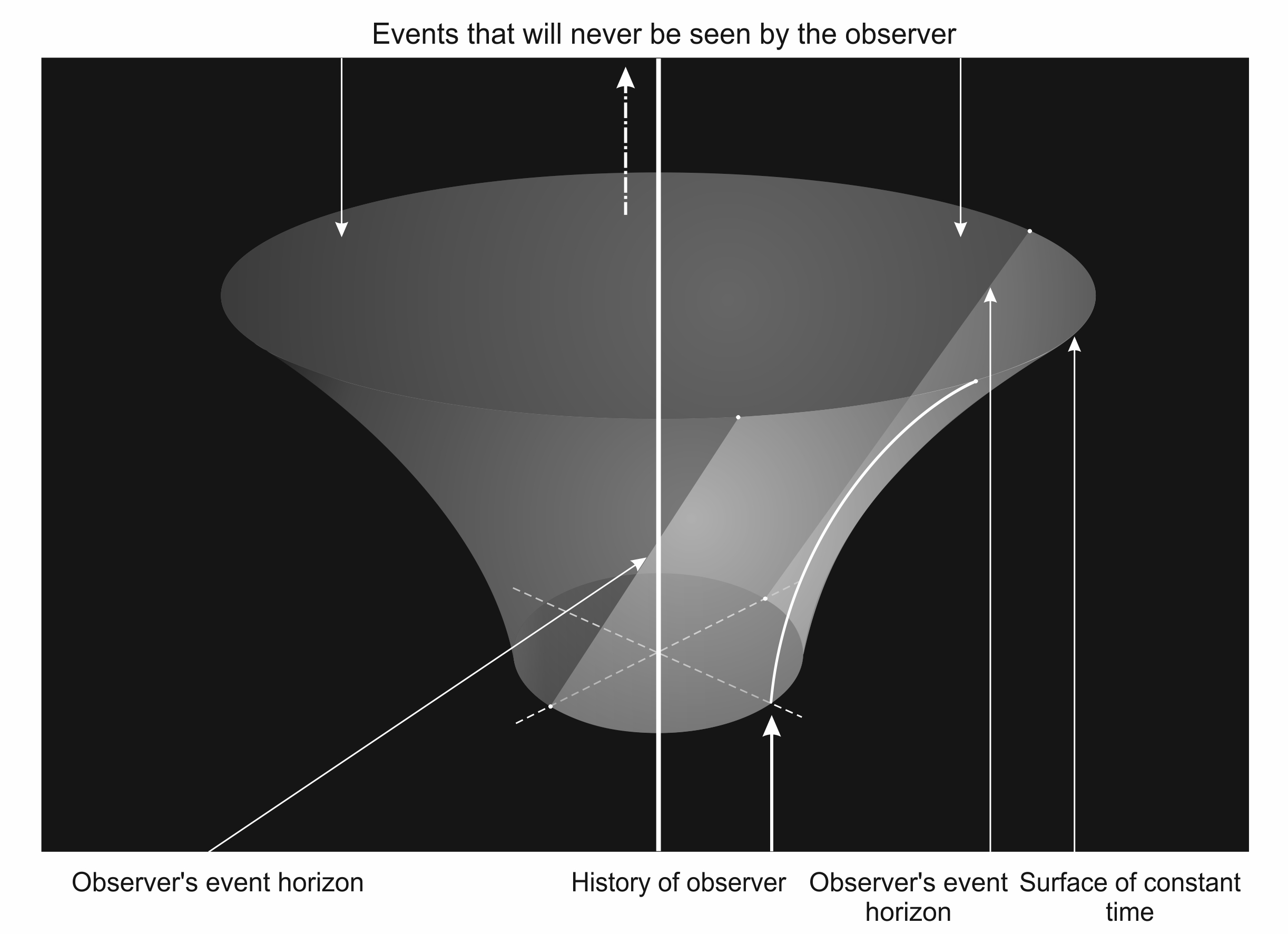}
\caption{Correct (eternal) observer's event horizon for $0\leq t \leq t_0$}
\label{fig:fig2}
\end{figure}

\begin{figure}[h]
\includegraphics[width=13.5cm]{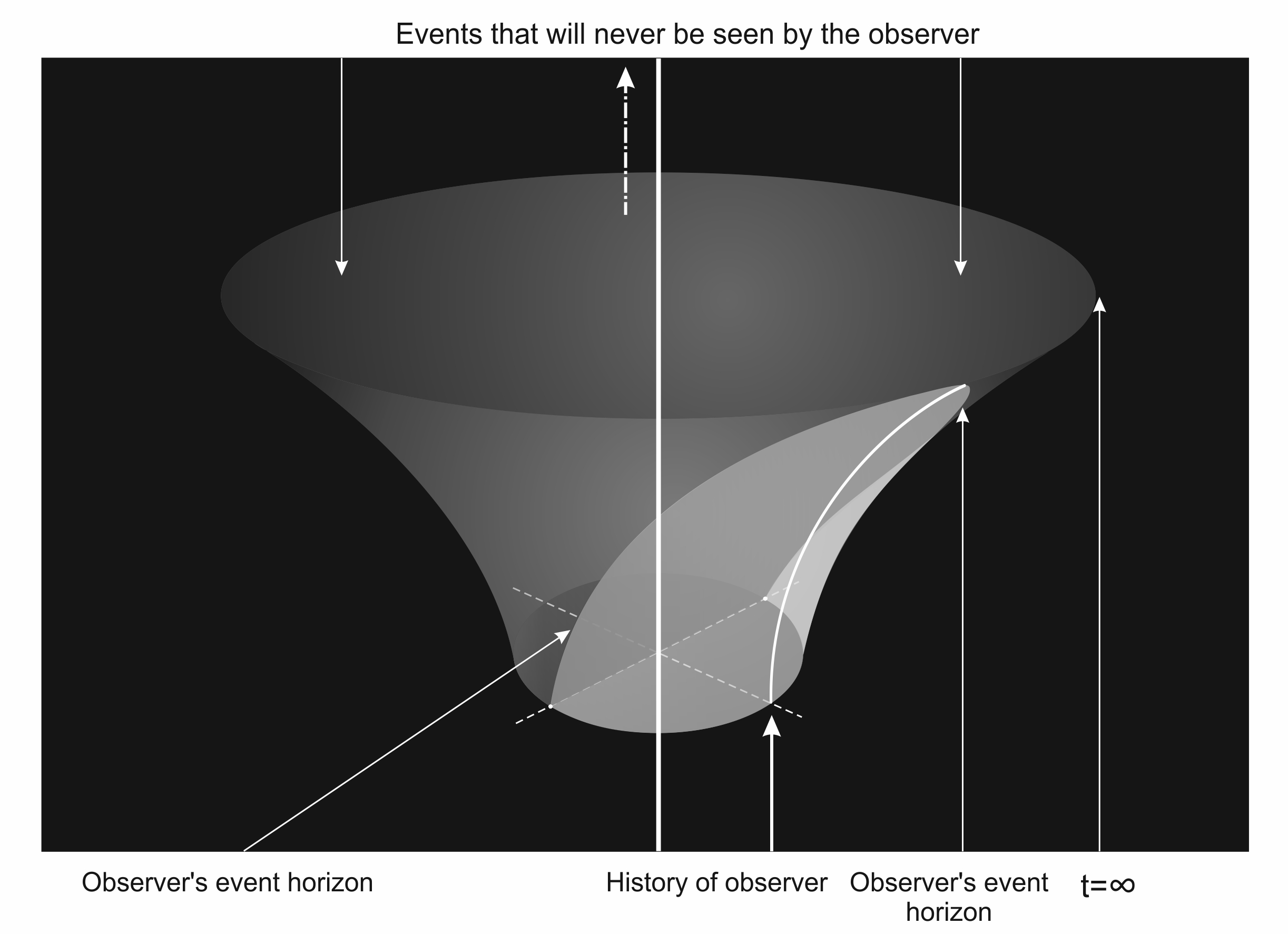}
\caption{(Eternal) observer's event horizon for $0\leq t \leq \infty$}
\label{fig:fig3}
\end{figure}

\vspace{5mm}

Berestovskii Valerii Nikolaevich, \newline Sobolev Institute Of mathematics SB RAS,
Omsk department
\newline 644099, Omsk, Pevtsova street, 13, Russia


\vspace{2mm}

Zubareva Irina Aleksandrovna, \newline Sobolev Institute Of mathematics SB RAS,
Omsk department
\newline 644099, Omsk, Pevtsova street, 13, Russia


\end{document}